\documentclass[a4paper,10pt]{article}
\usepackage{amssymb,amsmath,amsthm}
\usepackage{fullpage}
\usepackage{hyperref}
\usepackage{eucal}
\usepackage{color}
\usepackage{stackrel}
\usepackage{graphicx}

\newcommand{\Real}{\mathbb{R}}

\newcommand{\Dom}{\mathsf{D}}

\newcommand{\sii}{L^2}

\begingroup
    \makeatletter
    \@for\theoremstyle:=definition,remark,plain\do{%
        \expandafter\g@addto@macro\csname th@\theoremstyle\endcsname{%
            \addtolength\thm@preskip\parskip
            }%
        }
\endgroup
\newtheorem{Theorem}{Theorem}
\newtheorem{Lemma}{Lemma}

\newtheorem{Conjecture}{Conjecture}

\theoremstyle{definition}

\def\OMIT#1{}
\usepackage[normalem]{ulem}
\definecolor{DarkGreen}{rgb}{0,0.5,0.1} % David

\newcommand\soutD{\bgroup\markoverwith
{\textcolor{DarkGreen}{\rule[.5ex]{2pt}{1pt}}}\ULon}
\newcommand\soutP{\bgroup\markoverwith
{\textcolor{blue}{\rule[.5ex]{2pt}{1pt}}}\ULon}
\newcommand{\Hm}[1]{\leavevmode{\marginpar{\tiny%
$\hbox to 0mm{\hspace*{-0.5mm}$\leftarrow$\hss}%
\vcenter{\vrule depth 0.1mm height 0.1mm width \the\marginparwidth}%
\hbox to
0mm{\hss$\rightarrow$\hspace*{-0.5mm}}$\\\relax\raggedright #1}}}

\newcommand{\curl}{\mathop{\mathrm{curl}}\nolimits}

\begin{document}
%

%-------%
% TITLE %
%-------%
%------------------------------------------%
%------------------------------------------%
\title{\textbf{\LARGE Spectral inequalities for magnetic Neumann  Laplacian on  rectangles}}
\author{Tuyen Vu}
\date{\small 
\emph{
\begin{quote}
\begin{center}
Faculty of Mathematics and Informatics, Hanoi University of Science and Technology, Hanoi, Vietnam; tuyen.vuthibich@hust.edu.vn
\end{center}
\end{quote}
}
\smallskip
4 August 2026}
\maketitle
%------------------------------------------%
%------------------------------------------%
 
%
\begin{abstract}
\noindent
We consider the magnetic Neumann Laplacian  on  rectangles,
subject to non-homogeneous fields. 
We prove that the square is a local minimiser of the lowest eigenvalue among all rectangles of a fixed area for weak magnetic fields . We conjecture that it is a global minimiser 
both under the area or perimeter constraints and partially prove the conjecture 
by establishing lower and upper bounds to the principal eigenvalue.

%
%\bigskip
%\begin{itemize}
%\item[\textbf{Keywords:}]
%
%\item[\textbf{MSC (2010):}]
%Primary: ; 
%Secondary: 
%\end{itemize}
%
\end{abstract}

\section{Introduction}
We are motivated by the celebrated Faber-Krahn  isoperimetric inequality stating that among all planar sets of a fixed perimeter, the disk has the smallest ground-state energy of
the non-relativistic particle constrained to a semiconductor nanostructure by hard-wall boundaries \cite{4,8}.
The circular shape is also the minimiser under perimeter constraints due to the classical geometric isoperimetric inequality. More detailed results are introduced in \cite{Henrot,Henrot2}.

Consider  the non-relativistic quantum particle
constrained to  a bounded domain ~$\Omega\subset \mathbb{R}^2$, 
 subject to non-homogeneous magnetic fields.
The ground-state energy of the particle coincides with
the lowest eigenvalue $\lambda_1^C(\Omega)$ of 
the eigenvalue problem
\begin{equation}\label{Neumann.magnet} 
\left\{
\begin{aligned}
  (-i\nabla-A)^2 u &= \lambda u
  && \mbox{in} && \Omega 
  \,,
  \\
  -in(-i\nabla - A)u &= u 
  && \mbox{on} && \partial\Omega 
  \,,
\end{aligned}
\right.
\end{equation}
where $A:\overline{\Omega}\to\Real^2$ is a smooth magnetic potential associated
with a constant magnetic field $\curl A =: C \in \Real$, and $n$ stands for the outward unit normal of the  domain. The Laplacian $(-i\nabla-A)^2$ and Neumann boundary conditions are also discussed in \cite{1} in dimension three.

Restricting to rectangular domains,
let us define
\begin{equation}\label{rectangle} 
  \Omega_{a,b} := 
  \left(-\mbox{$\frac{a}{2}$},\mbox{$\frac{a}{2}$}\right) 
  \times \left(-\mbox{$\frac{b}{2}$},\mbox{$\frac{b}{2}$}\right) 
  ,
\end{equation}
where $a,b$ are any positive numbers provided that the area $|\Omega_{a,b}|=ab$ is fixed. Similar to the case of Dirichlet boundary conditions \cite{DK}
where the square plays the role of the minimiser for the lowest eigenvalue of the magnetic Laplacian in the regime of weak magnetic fields among all rectangles of a given area,
 we state the following conjecture. 
\begin{Conjecture}\label{Conj.main}
For every $a>0$ and $C \in \Real$,
\begin{equation}\label{iso}
  \lambda_1^C(\Omega_{a,a^{-1}}) 
  \geq \lambda_1^C(\Omega_{1,1})
  \,.
\end{equation}
\end{Conjecture}
It is noticeable that there is no loss of generality to consider
 the class of rectangles of area equal to~$1$ by virtue of scaling.
 We show that Conjecture~\ref{Conj.main} 
holds for weak magnetic fields.

\begin{Theorem}\label{Thm.small}
Conjecture~\ref{Conj.main} holds true for every $|C| \leq \delta$, where $\delta$ is a sufficiently small positive constant.
\end{Theorem}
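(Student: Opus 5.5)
The plan is a perturbation argument in the field strength $C$, carried out uniformly in the aspect ratio $a$. Write $\Omega_a:=\Omega_{a,a^{-1}}$ and fix the gauge, which is allowed because $\lambda_1^C$ is gauge invariant. By symmetry of~\eqref{iso} under $a\mapsto a^{-1}$ it suffices to treat $a\ge 1$. I read~\eqref{Neumann.magnet} as the magnetic Neumann problem, so that at $C=0$ the ground state is the constant function with $\lambda_1^0(\Omega_a)=0$ for every $a$. The zeroth order therefore carries no information, and everything must come from the $C^2$ coefficient.

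\textbf{Step 1 (choice of gauge).} On each $\Omega_a$ I take $A=C\nabla^\perp\psi_a$, where $\psi_a$ is the torsion function, that is, $-\Delta\psi_a=1$ in $\Omega_a$ and $\psi_a=0$ on $\partial\Omega_a$. Then $\curl A=C$ up to sign convention, $\operatorname{div}A=0$ and $A\cdot n=0$ on $\partial\Omega_a$. With this gauge the magnetic Neumann form is
\[
q_C[u]=\|\nabla u\|^2-2C\,\mathrm{Im}\!\int_{\Omega_a}\nabla^\perp\psi_a\cdot\bar u\nabla u\,dx+C^2\!\int_{\Omega_a}|\nabla\psi_a|^2|u|^2\,dx ,
\]
and the first-order term annihilates constants.

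\textbf{Step 2 (expansion to second order).} Analytic perturbation theory, or equivalently a Feshbach/Schur complement reduction onto the constants, gives
\[
\lambda_1^C(\Omega_a)=C^2\,\mu(a)+O(C^4),\qquad
\mu(a)=\frac{1}{|\Omega_a|}\inf_{\varphi}\int_{\Omega_a}|\nabla^\perp\psi_a-\nabla\varphi|^2\,dx .
\]
The infimum is attained at $\varphi=0$, because $\nabla^\perp\psi_a$ is $L^2$-orthogonal to all gradients: it is divergence free and tangential at the boundary. Since $|\Omega_a|=1$, this yields $\mu(a)=\int_{\Omega_a}|\nabla\psi_a|^2\,dx=T(\Omega_a)$, the torsional rigidity. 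This quantity is explicit by Fourier series in $x_2$:
\[
T(\Omega_{a,b})=\frac{ab^3}{12}-\frac{16 b^4}{\pi^5}\sum_{k\ \mathrm{odd}}\frac{\tanh(k\pi a/2b)}{k^5},
\]
and its comparison in $a$ is elementary.

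\textbf{Step 3 (uniform control of the remainder), which I expect to be the main obstacle.} The error $O(C^4)$ is controlled by the spectral gap $\lambda_2^0(\Omega_a)=\pi^2/a^2$. This gap degenerates as $a\to\infty$, so the expansion is not uniform in $a$. I would split into two regimes. On a compact range $1\le a\le a_0$ the remainder is uniform, and the comparison reduces to comparing $\mu(a)$ with $\mu(1)$ and using $\mu'(1)=0$ together with the sign of $\mu''(1)$, which gives a local statement. For $a\ge a_0$ I would use a direct variational bound in place of the expansion. For the lower bound, the diamagnetic inequality and separation of variables reduce the problem to one-dimensional de Gennes-type operators on the short side, which gives $\lambda_1^C(\Omega_a)\gtrsim \min\{C^2a^{-2},|C|\}$. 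For the upper bound on the square, I use the constant test function in the gauge of Step 1, giving $\lambda_1^C(\Omega_{1,1})\le C^2T(\Omega_{1,1})$. Matching these two bounds must be done with care.

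The delicate point, and the one I would check first, is the sign in Step 2. Among rectangles of fixed area the torsional rigidity is \emph{maximised} by the square (Pólya--Szegő), and $T(\Omega_a)\to 0$ as $a\to\infty$. With the Neumann reading, this leading term would favour the square as a maximiser, not a minimiser. So before carrying out Step 3 I would re-derive the problem from the boundary condition exactly as written in~\eqref{Neumann.magnet}. The condition $-in\cdot(-i\nabla-A)u=u$ is in fact of Robin type, so $\lambda_1^0(\Omega_a)$ depends on $a$ and contributes at zeroth order. In that case the proof would instead rest on a strict inequality $\lambda_1^0(\Omega_a)>\lambda_1^0(\Omega_{1,1})$ for $a\neq1$ at $C=0$, with a nondegenerate second derivative at $a=1$. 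This would be obtained by separation of variables, which gives $\lambda_1^0(\Omega_{a,b})=\nu(a)+\nu(b)$ with $\nu$ the one-dimensional Robin eigenvalue, together with convexity of $t\mapsto\nu(e^t)$. Continuity in $C$, uniform on compact sets of $a$, together with the large-$a$ bounds above, would then give the result for $|C|\le\delta$.
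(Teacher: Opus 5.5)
There is a genuine gap, and most of the proposal is aimed at the wrong operator. The form $q_a^A$ contains $\|u\|^2_{L^2(\partial\Omega_a)}$, so the boundary condition in \eqref{Neumann.magnet} is the magnetic Robin condition $\partial_n u-i(A\cdot n)u+u=0$. Steps 1--3 (constant ground state, torsion expansion, de Gennes bounds) therefore have to be discarded. As you suspected, for the true Neumann operator the claim is simply false: in the gauge $A=(-Cx_2,0)$ the constant function gives $\lambda_1^C(\Omega_a)\le C^2/(12a^2)\to 0$.

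Your fallback ingredient at $C=0$ is correct. It is in fact stronger than the paper's Lemma~\ref{Lem.Robinderivatives}, which only gives $\dot\lambda_1(\Omega_1)=0$ and $\ddot\lambda_1(\Omega_1)>0$. With $\theta=mL/2$ one has $\nu=\cot^2\theta$ and $L=2\theta\tan\theta$. Then $d\nu/d\log L=-2\theta\cos^2\theta/[\sin^2\theta\,(\theta+\sin\theta\cos\theta)]$ is strictly increasing in $\theta$, so $t\mapsto\nu(e^t)$ is strictly convex and the square is the strict global minimiser at $C=0$.

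The gap is the passage to $|C|\le\delta$. You mention a nondegenerate second derivative at $C=0$, but near the square ``continuity in $C$'' cannot finish the job. The margin $\lambda_1^0(\Omega_a)-\lambda_1^0(\Omega_1)$ vanishes quadratically as $a\to1$. The error from perturbing in $C$ does not vanish there: already $\lambda_1^C(\Omega_1)>\lambda_1^0(\Omega_1)$ for $C\neq0$. On $|a-1|\le\eta$ you must compare at the same $C$ through derivatives in $a$, which is what the paper's Theorem~\ref{Thm.local} supplies:
\begin{itemize}
\item $\partial_a\lambda_1^C(\Omega_a)|_{a=1}=0$ for every $C$, by the symmetry $\lambda_1^C(\Omega_a)=\lambda_1^C(\Omega_{a^{-1}})$ of the square;
\item $\partial_a^2\lambda_1^C(\Omega_a)|_{a=1}>0$ for small $|C|$, by continuity from the $C=0$ computation.
\end{itemize}
To glue this with the far region you need the positivity on a whole box $|a-1|\le\eta$, $|C|\le\delta$. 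That follows from joint analyticity in $(a,C)$ of the simple eigenvalue of the type (B) family $\hat q_a^A$.

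Away from the square, the ``large-$a$ bounds above'' you invoke are the Neumann ones. A lower bound of size $\min\{C^2a^{-2},|C|\}\to 0$ can never beat $\lambda_1^C(\Omega_{1,1})\ge 2m^2>0$.

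For the Robin problem this region is actually easy.
\begin{itemize}
\item The diamagnetic inequality $\lambda_1^C(\Omega_a)\ge\lambda_1^0(\Omega_a)$ still holds, because the boundary term is unchanged under $u\mapsto|u|$.
\item Testing with the real $C=0$ ground state of the square in the gauge \eqref{gauge} gives $\lambda_1^C(\Omega_{1,1})\le\lambda_1^0(\Omega_{1,1})+C^2/8$.
\end{itemize}
Strict convexity then covers all $|\log a|\ge\eta$ at once, provided $C^2/8\le\lambda_1^0(\Omega_{e^\eta})-\lambda_1^0(\Omega_{1,1})$.

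With these two repairs your route is a complete proof. It differs from the paper's only in how it treats rectangles far from the square: convexity plus diamagnetism, instead of the separate upper and lower eigenvalue bounds of the paper's Section 3.
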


The  paper is organised as follows.
In Section~\ref{Sec.2} we construct the spectral problem 
 of the self-adjoint magnetic Laplacian on a general rectangle, subject to non-homogeneous Neumann boundary conditions,  transform it into an equivalent one on a square, and prove that  the square is
a local minimiser for weak magnetic fields.
In Section~\ref{Sec.3} we give 
upper and lower bounds to the eigenvalue
$\lambda_1^C(\Omega_{a,a^{-1}})$,
showing the validity of Conjecture~\ref{Conj.main}
for rectangles far from the square.

\section{The square is a local minimiser}\label{Sec.2}
For simplicity, we abbreviate $$\Omega_a:=\Omega_{a,\frac{1}{a}}, \lambda_a := \lambda_1^C(\Omega_{a,\frac{1}{a}}).$$
Denotes by $Q^A_a$  the self-adjoint
semiclassical Laplacian in $\sii(\Omega_a)$
associated with the form
\begin{equation} 
\begin{aligned}
   q_a^A[u] &:= \|\partial_1^A u\|^2 + \|\partial_2^A u\|^2+ \|u\|^2_{L^2(\partial\Omega_a)} 
  \text{ and } \Dom(q_a^A)= H^1(\Omega_a)\,,\\  
  \Dom(Q_a^A) &:= \{ u\in H^1(\Omega_a): -in(-i\nabla-A)u =u \text{ on } \partial\Omega_a \}
  \,,
\end{aligned}
\end{equation}
where~$\|\cdot\|$ denotes the norm of $\sii(\Omega_a)$ 
and  $\partial_k^A := \partial_k-iA_j$
with $k \in \{1,2\}$.
It is remarkable that the spectrum of $Q_{a}^A$ is unchanged when altering the magnetic potential $A$ providing  the same magnetic field $\curl A = C$. As a consequence and without loss of generality,
we can pick the gauge
\begin{equation}\label{gauge}
  A(x,y) := \big(-y,x\big) \dfrac{C}{2}
  \,,
\end{equation}
where $\big(x,y\big) \in \Omega_{a}$.

Now  we use the unitary transform
$
  U: \sii(\Omega_a)
  \to \sii(\Omega_1)
$
by setting $(Uu)(x):=u(a x_1,a^{-1} x_2)$ with $(x_1,x_2)\in \Omega_1$,
and define a unitarily equivalent (therefore isospectral) operator
$\hat{Q}_{a}^A := U Q_{a}^A U^{-1}$.
It is associated with the form~$\hat{q}_a^A$ defined by
\begin{equation} 
\begin{aligned}
  \hat{q}_a^A[u] &= 
  a^{-2} \, \|\partial_1^A u\|^2 
  + a^2 \, \|\partial_2^A u\|^2 + a \|u\|^2_{L^2(\Gamma_2)}+ a \|u\|^2_{L^2(\Gamma_4)}+ \dfrac{1}{a} \|u\|^2_{L^2(\Gamma_1)}+ \dfrac{1}{a} \|u\|^2_{L^2(\Gamma_3)}
  \,, 
  \\  
  u &\in \Dom(\hat{h}_a^A) = H^1(\Omega_1)
  \,,
  \end{aligned}
\end{equation}
where
$$\Gamma_1:=\{(\dfrac{1}{2},x_2)\in\partial\Omega_1: -\dfrac{1}{2}\leq x_2\leq\dfrac{1}{2}\},\Gamma_3:=\{(-\dfrac{1}{2},x_2)\in\partial\Omega_1: -\dfrac{1}{2}\leq x_2\leq\dfrac{1}{2}\},$$
$$\Gamma_2:=\{(x_1,\dfrac{1}{2})\in\partial\Omega_1: -\dfrac{1}{2}\leq x_1\leq\dfrac{1}{2}\},\Gamma_4:=\{(x_1,-\dfrac{1}{2})\in\partial\Omega_1: -\dfrac{1}{2}\leq x_1\leq\dfrac{1}{2}\},$$
and~$A$ is still determined as \eqref{gauge} with  $(x_1,x_2) \in \Omega_1$.  
It is apparent that $\hat{H}_{1}^A = H_{1}^A$.
and $\Dom(\hat{h}_{a}^A)=\Dom(\hat{h}_{1}^A)$ for every $a>0$.

Using the variational characterisation of the ground-state energy, we get
\begin{equation}\label{Rayleigh} 
  \lambda_1^C(\Omega_a)
  = \inf_{\stackrel[u \not= 0]{}{u \in H^{1}(\Omega_1) }} 
  \frac{\hat{h}_a^A[u]}{\, \|u\|^2}  
  \,,
\end{equation}
where the infimum can be replaced by a minimum.

Now we consider the eigenvalue problem of the Laplace operator defined on $\Omega_a$, subject to Robin boundary conditions
\begin{equation}\label{Robin} 
\left\{
\begin{aligned}
  -\Delta u &= \lambda u
  && \mbox{in} && \Omega_a 
  \,,
  \\
 \dfrac{\partial u}{\partial n}+u &= 0 
  && \mbox{on} && \partial\Omega_a 
  \,.
\end{aligned}
\right.
\end{equation}
We denote the first eigenvalue of the Laplacian  by $\lambda_1(\Omega_a)$ and the derivative with respect to $a$ by a dot. It is noticeable that the operator is self-adjoint and \eqref{Robin} is settled by virtue of the availability of explicit solutions \cite{Laugesen}. Using the same transform $U$, we have the following lemma.
\begin{Lemma}\label{Lem.Robinderivatives}
\begin{align} 
  \left.
  \frac{\partial \lambda_1(\Omega_a)}{\partial a}
  \right|_{a=1} &= 0
  \,,
  \label{derivatives1}
  \\
  \left.
  \frac{\partial^2 \lambda_1(\Omega_a)}{\partial a^2}
  \right|_{a=1} &>0
  \,.
  \label{derivatives2}
\end{align}
\end{Lemma}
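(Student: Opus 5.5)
The plan is to use separation of variables and reduce everything to an explicit one-dimensional transcendental equation. This avoids working with the transformed form on $\Omega_1$. The Robin problem \eqref{Robin} on $\Omega_a=\left(-\frac{a}{2},\frac{a}{2}\right)\times\left(-\frac{1}{2a},\frac{1}{2a}\right)$ separates. Hence $\lambda_1(\Omega_a)=\mu(a)+\mu(a^{-1})$, where $\mu(L)$ is the lowest eigenvalue of $-u''=\mu u$ on $\left(-\frac{L}{2},\frac{L}{2}\right)$ with $u'\cdot n+u=0$ at the endpoints.

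The ground state of this one-dimensional problem is even and positive, $u(x)=\cos(kx)$. The boundary condition gives $k\tan(kL/2)=1$ with $0<kL/2<\pi/2$. Writing $\theta=kL/2$ and $s=L/2$, this becomes
\begin{equation*}
  \theta\tan\theta=s,\qquad \mu=\frac{\theta(s)^2}{s^2}.
\end{equation*}
The map $\theta\mapsto\theta\tan\theta$ is a smooth strictly increasing bijection from $(0,\pi/2)$ onto $(0,\infty)$. By the implicit function theorem, $\theta(s)$, and hence $\mu(L)$, is therefore smooth and real-analytic in $L>0$, so differentiating in $a$ is justified.

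For \eqref{derivatives1}, put $f(a)=\mu(a)+\mu(a^{-1})$. Then $f'(a)=\mu'(a)-a^{-2}\mu'(a^{-1})$, which vanishes at $a=1$ by symmetry. For \eqref{derivatives2}, differentiating again gives
\begin{equation*}
  f''(1)=2\big(\mu''(1)+\mu'(1)\big).
\end{equation*}
Equivalently, $f''(1)=2g''(0)$ with $g(t):=\mu(e^t)$. So the claim reduces to showing that $t\mapsto\mu(e^t)$ is convex at $t=0$, i.e.\ at $L=1$, $s=\frac12$.

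To establish this sign, I would compute the derivatives implicitly. Differentiating $\theta\tan\theta=s$ gives
\begin{equation*}
  \theta'=\frac{1}{\tan\theta+\theta\sec^2\theta},
\end{equation*}
and differentiating once more expresses $\theta''$ through $\theta,\theta'$. Then $\mu=\theta^2/s^2$, with $s=e^t/2$, yields $g''(0)$ as an explicit expression in $\theta_0$. Here $\theta_0\in(0,\pi/2)$ is the root of $\theta_0\tan\theta_0=\frac12$, so $\theta_0\approx0.6533$. I expect this last evaluation to be the main obstacle. The expression involves $\tan\theta_0$ and $\sec^2\theta_0=1+\tan^2\theta_0$, and its sign is not obviously positive by structure.

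To handle this, I would first eliminate $\tan\theta_0=1/(2\theta_0)$, so that $g''(0)$ becomes a rational function of $\theta_0$ alone. I would then certify its sign on a rigorous enclosing interval for $\theta_0$, say $\theta_0\in[0.65,0.66]$. Such an interval follows from the monotonicity of $\theta\tan\theta$ together with two elementary evaluations. If the rational expression is monotone or has controlled derivative on that interval, the positivity of $f''(1)$ follows.

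As a cross-check, I would also use the asymptotics. For small $L$ one has $\mu(L)\approx 2/L$, and $2e^{-t}$ is convex in $t$, which is consistent with the expected sign.
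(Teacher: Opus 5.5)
Your proposal is essentially the paper's proof: separate variables, implicitly differentiate the one-dimensional condition $k\tan(kL/2)=1$, and evaluate at the square, with your reduction $f''(1)=2(\mu''(1)+\mu'(1))$ via $a\leftrightarrow a^{-1}$ being tidier bookkeeping for the paper's separate computation of $\ddot m_1(1)$ and $\ddot m_2(1)$. The sign step you defer to interval arithmetic is in fact manifest once you do the elimination you describe: with $m=2\theta_0$ (the paper's $m$) one has $\tan\frac{m}{2}=\frac{1}{m}$, so $\sin m=\frac{2m}{m^2+1}$ and $\cos m=\frac{m^2-1}{m^2+1}$, and your computation gives
\[
  f''(1)=\frac{4m^3\,(2m^2+2m\sin m+m\sin m\cos m-\sin^2 m)}{(m+\sin m)^3}
  =\frac{8m^4(m^2+1)(m^2+5)}{(m^2+3)^3}\approx 4.06>0,
\]
which also serves as a check on the paper, whose simplification after the first line of its display contains an algebra slip (its final expression, about $3.51$, is not the true second derivative, but it is still positive, so the lemma stands).
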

\begin{proof}
Employ the same arguments of holomorphic families of operators of type B in \cite{DK,KLV}, we derive that
$a \mapsto  \lambda_1(\Omega_a)$
is a real-analytic function on a neighbourhood of~$a=1$.

Using the approach of separating variables \cite{Laugesen}, we obtain that the first eigenvalue of system \eqref{Robin} is explicitly computed as
\begin{equation}\label{Robinfirst}  
\lambda_1(\Omega_a)=m_1^2+m_2^2\,,  
\end{equation}
where $m_1=m_1(a),m_2=m_2(a)$ are solutions
of the equations as follows
\begin{equation}
m_1\tan\dfrac{am_1}{2}-1=0\,, \quad \quad \, m_2\tan\dfrac{m_1}{2a}-1=0\,.
\end{equation}
By implicit derivative formula, we have
\begin{equation}\label{impliciteq}
\dot{m}_1=\dfrac{-m_1^2}{\sin am_1+ m_1a} \,, \quad \quad \dot{m}_2=\dfrac{m_2^2}{a^2\sin\dfrac{m_2}{a}+ m_2a} \,.
\end{equation}
It is remarkable that if we take $a=1$ then $m_1(1)=m_2(1):=m$, where $m$ is the positive root of the equation $$m\tan\dfrac{m}{2}-1=0.$$ 
It is clear that $m \in (1,\dfrac{\pi}{2})$.

Differentiating both sides of \eqref{Robinfirst}, we have
\begin{equation}\label{first.derivative}
  \dot\lambda_1(\Omega_a) = 2m_1(a)\dot m_1+ 2m_2(a)\dot m_2 
  \,,
\end{equation}
Thus,
\begin{equation}\label{second.derivative}
 \dot\lambda_1(\Omega_1)=0 \,,  \ddot\lambda_1(\Omega_a) = 2\dot m_1^2+ 2\dot m_2^2+ 2m_1(a)\ddot{m}_1+ 2m_2(a)\ddot{m}_2
  \,.
\end{equation}
Differentiating both sides of these identities \eqref{impliciteq}, we obtain
\begin{equation}\label{2impliciteq}
\begin{aligned}
\ddot{m}_1(a)&=\dfrac{-2m_1\dot{m}_1\, (\sin am_1+m_1a)+m_1^2\, (m_1+\dot{m}_1a)(\cos am_1+1)}{(\sin am_1+ m_1a)^2} \,, \\
 \ddot{m}_2(a)&=\dfrac{2m_2\dot{m}_2\, (a^2\sin\dfrac{m_2}{a}+ m_2a)-m_2^2\, (2a\sin\dfrac{m_2}{a}+a^2\dfrac{a\dot{m}_2-m_2}{a^2}\cos\dfrac{m_2}{a}+\dot{m}_2a+m_2)}{(a^2\sin\dfrac{m_2}{a}+ m_2a)^2} \,.
\end{aligned}
\end{equation}
Consequently,
\begin{equation}\label{3impliciteq}
\begin{aligned}
\ddot{m}_1(1)&=\dfrac{-2m\,\dot{m}_1\, (\sin m+m)+m^2\, (m+\dot{m}_1)(\cos m+1)}{(\sin m+ m)^2} \,, \\
 \ddot{m}_2(1)&=\dfrac{2m\, \dot{m}_2\, (\sin m+ m)-m^2\, [2\sin m+(\dot{m}_2-m)\cos m+\dot{m}_2+m]}{(\sin m+ m)^2} \,.
\end{aligned}
\end{equation}
Therefore,
\begin{equation}\label{2second.derivative}
\begin{aligned}
   \ddot\lambda_1(\Omega_1) &= \dfrac{4m^4}{(\sin m+m)^2}+ 2m \dfrac{2m^3+m^2(m-\dfrac{m^2}{\sin m+m})(\cos m+1)}{(\sin m+m)^2} \\
   &+ 2m \dfrac{ 2m^3-m^2\, [2 \sin m+(\dfrac{m^2}{\sin m+m}-m)\cos m+\dfrac{m^2}{\sin m+m}+m]}{(\sin m+m)^2}
  \,,\\
  &=\dfrac{4m^4}{(\sin m+m)^2}+2m^4 \, \dfrac{3\sin m+3m-m\cos m-m}{(\sin m+m)^3} \\
  &+ 2m^3 \, \dfrac{2m-2\sin m+(m-\dfrac{m^2}{\sin m+m})(1-\cos m)-2m}{(\sin m+m)^2} \,,\\
  &=2m^4 \, \dfrac{3\sin m+2m-m\cos m}{(\sin m+m)^3}+2m^3 \, \dfrac{2m-2\sin m+(m-\dfrac{m^2}{\sin m+m})(1-\cos m)}{(\sin m+m)^2} \,,\\
  &=2m^4 \, \dfrac{3\sin m+2m-m\cos m}{(\sin m+m)^3}+2m^3 \, \dfrac{2m-2\sin m+\dfrac{m\sin m}{\sin m+m} \, (1-\cos m)}{(\sin m+m)^2} \,,\\
  &> 0 \,.
  \end{aligned}
\end{equation}
Hence, it concludes the proof of the lemma.
\end{proof}
The lemma shows the local optimality of the square for the lowest positive eigenvalue of the Neumann Laplacian. We will show it still holds for the magnetic Laplacian in the regime of weak magnetic fields. Before approaching the result, we abbreviate $$\|.\|^2_{L^2(\Gamma_1)}+ \|.\|^2_{L^2(\Gamma_3)}:=\|.\|^2_{1,3} \, , \quad \quad  \|.\|^2_{L^2(\Gamma_2)}+ \|.\|^2_{L^2(\Gamma_4)}:=\|.\|^2_{2,4} \, ,$$
$$ (.,.)_{L^2(\Gamma_1)}+ (.,.)_{L^2(\Gamma_3)}:=(.,.)_{1,3} \, , \quad \quad  (.,.)_{L^2(\Gamma_2)}+ (.,.)_{L^2(\Gamma_4)}:=(.,.)_{2,4} \, .$$
We have the following theorem
\begin{Theorem}\label{Thm.local}
There exists a positive number~$\delta$ such that,
for every $|C|\leq \delta$, 
\begin{equation}\label{derivatives} 
  \left.
  \frac{\partial \lambda_1^C(\Omega_a)}{\partial a}
  \right|_{a=1} = 0
  \qquad \mbox{and} \qquad 
  \left.
  \frac{\partial^2 \lambda_1^C(\Omega_a)}{\partial a^2}
  \right|_{a=1} > 0 
  \,,
\end{equation}
and thus the square is a local minimiser for the smallest positive eigenvalue of the magnetic Neumann Laplacian among all rectangles of a given area.
\end{Theorem}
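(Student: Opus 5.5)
The plan is a perturbation argument in $C$ around the non-magnetic Robin problem of Lemma~\ref{Lem.Robinderivatives}, carried out in the fixed Hilbert space $\sii(\Omega_1)$ with the forms $\hat q_a^A$.

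\textbf{Analyticity.} First I would observe that $(a,C)\mapsto \hat q_a^A$, with $A$ given by \eqref{gauge} on $\Omega_1$, is a holomorphic family of type (B) in both variables near $(1,0)$. The form domain is always $H^1(\Omega_1)$, and the coefficients $a^{\pm2}$, $a^{\pm1}$, $C$, $C^2$ enter polynomially or analytically.

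For $C=0$ the form is exactly the transformed Robin form of \eqref{Robin}. Its lowest eigenvalue $\lambda_1(\Omega_a)$ is simple: separation of variables gives a product of two simple one-dimensional Robin ground states. By Kato's theory, $\lambda_1^C(\Omega_a)$ is therefore simple and jointly real-analytic in $(a,C)$ on a neighbourhood $|a-1|<\varepsilon$, $|C|<\varepsilon$. In particular $\partial_a^2\lambda_1^C(\Omega_a)|_{a=1}$ depends continuously on $C$.

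\textbf{First derivative by symmetry.} The first identity in \eqref{derivatives} does not need smallness of $C$. Consider the unitary map $(Ru)(x_1,x_2):=u(x_2,-x_1)$, a rotation by $\pi/2$, on $\sii(\Omega_1)$. It maps $\Gamma_1\cup\Gamma_3$ onto $\Gamma_2\cup\Gamma_4$. The symmetric gauge \eqref{gauge} is rotation-invariant, so $\partial_1^A$ and $\partial_2^A$ are interchanged up to sign. Hence $\hat q_a^A[Ru]=\hat q_{1/a}^A[u]$, and so $\lambda_1^C(\Omega_a)=\lambda_1^C(\Omega_{1/a})$.

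This is also clear geometrically: $\Omega_{1/a}$ is $\Omega_a$ rotated, and $\curl A$ is rotation-invariant. Differentiating at $a=1$ gives $\partial_a\lambda_1^C(\Omega_a)|_{a=1}=0$ for every $C$ where the branch is analytic.

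\textbf{Second derivative by continuity.} At $C=0$, Lemma~\ref{Lem.Robinderivatives} gives $\partial_a^2\lambda_1(\Omega_a)|_{a=1}=:\kappa>0$. Since $\partial_a^2\lambda_1^C(\Omega_a)|_{a=1}$ is continuous (indeed analytic) in $C$ and equals $\kappa$ at $C=0$, there is $\delta>0$ with $\partial_a^2\lambda_1^C(\Omega_a)|_{a=1}>\kappa/2>0$ for $|C|\le\delta$. Together with the vanishing first derivative, this shows that $a=1$ is a strict local minimum.

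If a more quantitative statement is wanted, I would write the second derivative through the Hellmann–Feynman / second-order perturbation formula. This involves $\langle \ddot{\hat q}_a u_C,u_C\rangle$ at $a=1$ minus twice the reduced-resolvent term of $\dot{\hat q}_a u_C$. I would then show, using the normalized eigenfunction $u_C=u_0+O(C)$ in $H^1$, that each term differs from its $C=0$ value by $O(|C|)$.

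\textbf{Main obstacle.} The delicate step is justifying the joint analyticity, or at least continuity of the second $a$-derivative in $C$, uniformly near $a=1$. This requires checking relative form-boundedness of the boundary and magnetic perturbations with respect to the $C=0$, $a=1$ form, via the trace inequality on $H^1(\Omega_1)$. It also requires simplicity of the eigenvalue at $C=0$, which is what keeps the eigenvalue branch analytic rather than merely continuous. Once that is in place, the rest follows from the symmetry and Lemma~\ref{Lem.Robinderivatives}.
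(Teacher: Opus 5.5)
Your proposal is correct and follows essentially the paper's route: type (B) analyticity, with simplicity inherited from the $C=0$ Robin problem; vanishing of the first $a$-derivative by the rotational symmetry of the square; and positivity of the second derivative for small $|C|$ by continuity in $C$ from Lemma~\ref{Lem.Robinderivatives}. The one difference is that you get $\partial_a\lambda_1^C(\Omega_a)|_{a=1}=0$ from the eigenvalue identity $\lambda_1^C(\Omega_a)=\lambda_1^C(\Omega_{1/a})$ (via $\hat q_a^A[Ru]=\hat q_{1/a}^A[u]$), whereas the paper uses the Hellmann--Feynman formula together with the eigenfunction symmetry $\|\partial_1^A u_1\|=\|\partial_2^A u_1\|$, $\|u_1\|_{1,3}=\|u_1\|_{2,4}$; your version is slightly cleaner, and making joint analyticity in $(a,C)$ explicit justifies the continuity in $C$ that the paper only asserts.
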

\begin{proof}
Using the criterion \cite[Sec.~VII.4.8]{Kato} and the arguments used in \cite{DK,KLV}, we have that
 $\{\hat{Q}_a^A\}_{C \in \Real}$ 
is a holomorphic family of operators of type~(B).
Since $\lambda_1(\Omega_a)$ is simple for every $a>0$, 
there exists a positive constant~$\delta_1$ such that 
$\lambda_1^C(\Omega_1)$ is simple 
for $|C| \leq \delta_1$. As a consequence,
 $C \mapsto \lambda_1^C(\Omega_1)$
is a real-analytic function on a neighbourhood of $C=0$.
By virtue of the simplicity,
$a \mapsto \lambda_1^C(\Omega_a)$
is a real-analytic function on a neighbourhood of~$a=1$.
Simultaneously, the associated eigenfunction~$u_a$ 
satisfying the normalisation $\|u_a\|=1$ 
is a real-analytic function in the topology of $H^1(\Omega_1)$
on a neighbourhood of~$a=1$.
We have the weak formulation of the eigenvalue equation
\begin{equation}\label{weak}  
  a^{-2} \, (\partial_1^A v,\partial_1^A u_a) 
  + a^2 \, (\partial_2^A v,\partial_2^A u_a) + \dfrac{1}{a} \, (v,u_a)_{1,3}  + a \, (v, u_a)_{2,4} 
  = \lambda_a \, (v,u_a)
\end{equation}
for every $v \in H^{1}(\Omega_1)$.

Differentiating~\eqref{weak} with respect to~$a$
\begin{equation}\label{first} 
\begin{aligned} 
  &a^{-2} \, (\partial_1^A v,\partial_1^A \dot{u}_a) 
  + a^2 \, (\partial_2^A v,\partial_2^A \dot{u}_a)
  - 2 a^{-3} \, (\partial_1^A v,\partial_1^A u_a) 
  + 2a \, (\partial_2^A v,\partial_2^A u_a) +\dfrac{1}{a} \, (v,\dot{u}_a)_{1,3}\\
  & -\dfrac{1}{a^2} \, (v,u_a)_{1,3}+ (v,u_a)_{2,4} + a \, (v,\dot{u}_a)_{2,4}
  = \lambda_a \, (v,\dot{u}_a)
  + \dot\lambda_a \, (v,u_a)
  \,.
  \end{aligned}
\end{equation}
Taking $v=u_a$ in~\eqref{first},
$v = \dot{u}_a$ in~\eqref{weak}     
and combining these equations evaluated at $a=1$,
we derive
\begin{equation}\label{first.derivative}
  \dot\lambda_1^C(\Omega_1) = -2 \, \|\partial_1^A u_1\|^2
  + 2 \, \|\partial_2^A u_1\|^2-\|u_1\|^2_{1,3}+\|u_1\|^2_{2,4}
  = 0
  \,,
\end{equation}
where the second equality holds true due to the rotational symmetry 
of the square, $u_1$~necessarily satisfies $\|\partial_2^A u_1\|^2=\|\partial_1^A u_1\|^2, \|u_1\|^2_{1,3}=\|u_1\|^2_{2,4}$.

Furthermore,
$\lim\limits_{C\rightarrow0}\ddot{\lambda}_1^C(\Omega_1) = \lambda_1(\Omega_1) > 0$ due to \eqref{2second.derivative}. As a result, there exists a positive constant $\delta_2$ such that  $\ddot{\lambda}_1^C(\Omega_1) >0$ for  all $|C| \leq \delta_2$.
Let us take $\delta:=\min\{\delta_1,\delta_2\}$ then there exists a positive constant $\delta$ such that the square is a local minimiser of the ground-state energy of the magnetic Neumann Laplacian among all rectangles of a fixed area provided that $|C|\leq\delta$. It concludes the proof of the theorem.

\end{proof}

...

%--------------%
% BIBLIOGRAPHY %
%--------------%
%
%\addcontentsline{toc}{section}{References}
%\bibliography{bib}
%\bibliographystyle{amsplain}
%

%\noteD{Some of the references are not cited in the text.}%

%\noteD{References should be alphabetically sorted.
%The style should be unified}%
%\noteD{Use bibTeX.}%
%\noteD{My paper with Briet should be mentioned.}%

\providecommand{\bysame}{\leavevmode\hbox to3em{\hrulefill}\thinspace}
\providecommand{\MR}{\relax\ifhmode\unskip\space\fi MR }
% \MRhref is called by the amsart/book/proc definition of \MR.
\providecommand{\MRhref}[2]{%
  \href{http://www.ams.org/mathscinet-getitem?mr=#1}{#2}
}
\providecommand{\href}[2]{#2}

\end{document}